\theoremstyle{plain}
\title{ON THE CONNECTION BETWEEN THE CONJUGATE GRADIENT METHOD AND
  QUASI-NEWTON METHODS ON QUADRATIC PROBLEMS}
\author{Anders FORSGREN\thanks{\footTO} \\ Tove
  ODLAND\addtocounter{footnote}{-1}\footnotemark}
\def\footTO{Optimization and Systems Theory, Department of
            Mathematics, KTH Royal Institute of Technology, SE-100 44
            Stockholm, Sweden ({\tt andersf@kth.se, odland@kth.se}).  Research
            supported by the Swedish Research Council (VR).}
\date  {The final publication is available at Springer via http://dx.doi.org/10.1007/s10589-014-9677-5}
\newcommand{\myparallel}{ \ / \! / \ }
\begin{document}
\maketitle\thispagestyle{empty}
\begin{abstract}
It is well known that the conjugate gradient 
method and a quasi-Newton method, using any well-defined 
update matrix from the one-parameter Broyden family of updates, 
produce identical iterates on a quadratic problem with 
positive-definite Hessian. This equivalence does not hold 
for any quasi-Newton method. We define precisely the
conditions on the update matrix in the quasi-Newton method that give rise to
this behavior. We show that the crucial facts are, that the range of each update
matrix lies in the last two dimensions of the Krylov subspaces defined by the 
conjugate gradient method and that the quasi-Newton condition is
satisfied. In the framework based on a sufficient condition
to obtain mutually conjugate search directions, we show that the
one-parameter Broyden family is complete.

A one-to-one correspondence between the Broyden parameter and the
non-zero scaling of the search
direction obtained from the corresponding quasi-Newton method
compared to the one obtained in the
conjugate gradient method is derived.
In addition, we show that the update
matrices from the one-parameter Broyden family are almost always
well-defined on a quadratic problem with positive-definite
Hessian. The only exception is when the symmetric rank-one update is
used and the unit steplength is taken in the same iteration. In this
case it is the Broyden parameter that becomes undefined.
\linebreak

\noindent {\bf Keywords:} conjugate gradient method, quasi-Newton method, unconstrained program
\end{abstract}

\newtheorem{theorem}{Theorem}[section]
\newtheorem{lemma}[theorem]{Lemma}
\newtheorem{assumption}[theorem]{Assumption}
\newtheorem{corollary}[theorem]{Corollary}
\newtheorem{definition}[theorem]{Definition}
\newtheorem{proposition}[theorem]{Proposition}

\pagestyle{fancy}

\fancyhead{}
\fancyhead[RO, LE] {\thepage} 
\fancyhead[CO]{\leftmark}
\fancyhead[CE]{On the connection between CG and QN}
\fancyfoot{}

\section{Introduction}
\label{sec:introduction}
In this paper we examine some well-known methods used for solving unconstrained optimization
problems and specifically their behavior on quadratic problems. A
motivation why these problems are of interest is that the task of solving a linear system of equations
$Ax=b$, with the assumption $A=A^T \succ 0$, may equivalently be considered as the
one of solving an unconstrained quadratic programming problem, 
\begin{equation}\label{qp}
\min_{x \in \mathbb{R}^n}q(x)=\min_{x \in \mathbb{R}^n}\frac{1}{2}x^THx+c^Tx,\tag{QP}
\end{equation}
where one lets $H=A$ and $c=\nolinebreak-b$ to obtain the usual notation. 

Given an initial guess $x_0$, the general
idea of most methods for solving \eqref{qp} is to, in each iteration $k$, generate a search direction $p_k$ and
then take a steplength $\alpha_k$ along that direction to approach the
optimal solution. For $k \geq 0$, the next iterate is hence obtained as
\begin{equation}\label{nextiterate}
x_{k+1}=x_k+\alpha_kp_k.
\end{equation}
The main difference between methods is the way the search 
direction $p_k$ is generated. For high-dimensional problems it is
preferred that only function and gradient values are used in 
calculations. The gradient of the objective function
$q(x)$ is given by $g(x)=Hx+c$ and its value at $x_k$ is denoted by
$g_k$. 

The research presented in this paper stems from the desire to better
understand the well-known connection between the \emph{conjugate 
gradient method}, henceforth CG, and \emph{quasi-Newton methods},
henceforth QN. We are interested in determining precise conditions in
association with the generation of $p_k$ in QN such that, using
exact linesearch, CG and QN will generate the same sequence 
of iterates as they approach the optimal solution of \eqref{qp}. 

CG and QN are introduced briefly in Section \ref{sec:background} where we also state some
background results on the connection between
these two methods. In Section \ref{sec:results} we present our results and some concluding remarks
are made in Section \ref{sec:conclusion}.

\section{Background}
\label{sec:background}
On \eqref{qp}, naturally, we consider the use of \emph{exact
  linesearch}. Then in iteration $k$, the optimal steplength is given by
\begin{equation}\label{steplength}
\alpha_k=-\frac{p_k^Tg_k}{p_k^TH p_k}.
\end{equation}
Since $H=H^T \succ 0$, it can be shown that the descent property, 
$q(x_{k+1}) < q(x_{k})$, holds, as long as
\begin{equation}\label{nonorth}
p_k^Tg_k \neq 0.
\end{equation}
Note that the usual requirement $p_k^Tg_k<0$, i.e. that is $p_k$ is a \emph{descent
  direction} with respect to $q(x)$ at $x_k$ with steplength $\alpha_k >0$, is included in
the more general statement of \eqref{nonorth}. 

Given two parallel vectors and an initial point $x_k$, performing exact
linesearch from the initial point with respect to a given objective function along these two
vectors will yield the same iterate $x_{k+1}$. Hence, two methods will find the
same sequence of iterates if and only if the search directions generated
by the two methods are parallel. Therefore, in the remainder of this
paper our focus will be on parallel search directions rather than
identical iterates. We will denote parallel vectors by $p \myparallel p'$.

On \eqref{qp}, one is generally interested in methods for which
the optimal solution is found in at most $n$ iterations. It can be
shown that a sufficient property for this behavior is
that the method generates search directions which are
mutually conjugate with respect to $H$, i.e. $p_i^THp_j=0, 
\forall i \neq j$, see, e.g., \cite[Chapter 5]{nocedalwright}. 

\subsection{Conjugate gradient method} \label{sec:cg}
A generic way to generate conjugate vectors is by means of the
conjugate Gram-Schmidt process. Given a set of linearly 
independent vectors $\{a_0, \dots, a_{n-1}\}$, 
a set of vectors $\{p_0, \dots,p_{n-1}\}$ mutually conjugate with 
respect to $H$ can be constructed by letting $p_0=a_0$
and for $k>0$,
\begin{equation}\label{cdm}
p_k=a_k+\sum_{j=0}^{k-1}\beta_{kj}p_j.
\end{equation}
The values of $\{\beta_{kj}\}_{j=0}^{k-1}$ are uniquely determined in order to make $p_k$
conjugate to $\{p_0, \dots, p_{k-1}\}$ with respect to $H$. \emph{Conjugate
direction methods} is the common name for all methods which
are based on generating search directions in this manner.

With the choice $a_k=-g_k$ in \eqref{cdm} one obtains the conjugate gradient
method, CG, of Hestenes and Stiefel \cite{HestenesStiefel}. In effect, in CG let $p_0=-g_0$, and for $k>0$ the only $\beta$-value in \eqref {cdm} that will be non-zero is
\begin{equation}\label{beta}
\beta_{k,k-1}=\frac{p_{k-1}^THg_k}{p_{k-1}^TH p_{k-1}},
\end{equation}
where one may drop the first sub-index. It can be shown that
$\beta_{k-1}=g_k^Tg_k/g_{k-1}^Tg_{k-1}$, so that \eqref{cdm} may be
written as 
\begin{equation}\label{cgdir2}
p_k=-g_k+\beta_{k-1}p_{k-1}=-\sum_{i=0}^k\frac{g_k^Tg_k}{g_i^Tg_i}g_i.
\end{equation}
From the use of exact linesearch it holds that $g_k^Tp_j=0$, for all
$j\leq k-1$, which implies that $g_k^Tg_j=0$, for all
$ j\leq k-1$, i.e. the produced gradients are orthogonal and therefore linearly
independent, as required. As for any conjugate
directions method using exact linesearch, the search
directions of CG are descent directions.   See,
e.g., \cite{cgwopain} for an intuitive introduction to the
conjugate gradient method with derivation of the above relations.

In CG, one only needs to 
store the most recent previous search direction, $p_{k-1}$. This is a reduction in
the amount of storage required compared to a general conjugate direction
method where potentially all previous search directions are needed to
compute $p_k$.

Although equations \eqref{nextiterate}, \eqref{steplength},
\eqref{beta} and \eqref{cgdir2} give a complete description of an
iteration of CG, the
power and richness of the method is somewhat clouded in notation. An
intuitive way to picture what happens in an iteration of CG is to
describe it as a Krylov subspace method. 

\begin{definition}\label{krylov}
Given a matrix $A$ and a vector $b$ the \emph{Krylov subspace}
generated by $A$ and $b$ is
given by $\mathcal{K}_{k}(b,A) =span\{ b, Ab, \ldots, A^{k-1}b \}$.
\end{definition}
Krylov subspaces are linear subspaces, which are expanding,
i.e. $\mathcal{K}_{1}(b, A) \subseteq \mathcal{K}_{2}(b, A) \subseteq
\mathcal{K}_{3}(b, A) \subseteq
\dots$, and $\dim(\mathcal{K}_{k}(b,A))=k$. Given $x \in
\mathcal{K}_{k}(b, A)$, then $Ax \in
\mathcal{K}_{k+1}(b, A)$, see, e.g., \cite{gutknecht} for an introduction to
Krylov space methods. CG is a Krylov subspace method and 
iteration $k$ may be formulated as the following constrained
optimization problem
\begin{equation}\label{qpcg}
\min \ q(x), \ \text{s.t.} \ x \in x_0+\mathcal{K}_{k+1}(p_0,H),\tag{CG$_k$}
\end{equation}
see, e.g., \cite[Chapter 5]{nocedalwright}. The optimal solution of (CG$_k$) is $x_{k+1}$ and the corresponding multiplier is
given by \nolinebreak{$g_{k+1}=\nabla q(x_{k+1})=Hx_{k+1}+c$.} In each iteration, the
dimension of the affine subspace where the optimal solution is sought 
increases by one. After at most $n$ iterations the optimal solution in
$\mathbb{R}^n$ is found, which will
then be the optimal solution of (QP).\footnote{It may happen, depending on the
number of distinct eigenvalues of $H$ and the orientation of $p_0$,
that the optimal solution is found after less than $n$ iterations, see, e.g.,
\cite[Chapter 6]{saad}.} 

The search direction $p_k$ belongs to $\mathcal{K}_{k+1}(p_0,H)$, and as it is conjugate to all
previous search directions with respect to $H$ it holds that $span\{p_0, p_1, \dots, p_k\}=\mathcal{K}_{k+1}(p_0, H)$,
i.e., the search directions $p_0, \dots, p_k$ form an $H$-orthogonal
basis for $\mathcal{K}_{k+1}(p_0,H)$. We will henceforth refer to the search direction produced by CG, in
iteration $k$ on a given \eqref{qp}, as $p_k^{CG}$.

Since the gradients are mutually orthogonal, and because of the relationship
with the search directions in \eqref{cgdir2}, it can be shown that $span\{g_0,
\dots, g_k\}=\mathcal{K}_{k+1}(p_0,H)$, i.e. the gradients form an orthogonal
basis for $\mathcal{K}_{k+1}(p_0,H)$.

General conjugate direction methods can not be described as
Krylov subspace methods, since in general $span\{p_0, \dots,
p_{k}\} \neq \nolinebreak\mathcal{K}_{k+1}(p_0,H)$. We will use this
special characteristic of CG when
investigating the connection to QN.

Although our focus is quadratic programming it deserves mentioning that CG was extended to general unconstrained problems by
Fletcher and Reeves \cite{fletcherreeves}.

\subsection{Quasi-Newton methods}\label{sec:qn}

In QN methods the search directions are generated by solving
\begin{equation}\label{calp}
B_kp_k=-g_k,
\end{equation}
in each iteration $k$, where the matrix $B_k$ is chosen to be an approximation of $H$ in
some sense.\footnote{The
choice $B_k=H$ would give Newton's method, whereas the choice $B_k=I$
would give the steepest-descent method.} In this paper, we will consider \emph{symmetric} approximations of the Hessian, 
i.e. $B_k=B_k^T$. It is also possible to consider unsymmetric
approximation matrices, see, e.g., \cite{huang}. 

The first suggestion of a QN method was made by Davidon in 1959
\cite{davidon}, using the term \emph{variable metric method}. In
1963, in a famous paper by Fletcher and Powell \cite{fletcherpowell}, Davidon's
method was modified\footnote{"We have made both a simplification by
  which certain orthogonality conditions which are important to the
  rate of attaining the solution are preserved, and also an
  improvement in the criterion of convergence.'' \cite{fletcherpowell}} and this was the starting point for making
these QN methods widely known, used and studied. 

We choose to work with an approximation of the Hessian rather than
an approximation of the inverse Hessian, $M_k$, as many of the earlier papers
did, e.g. \cite{fletcherpowell}. Our results can however straightforwardly be derived
for the inverse point of view where \eqref{calp} is replaced by the
equation $p_k=-M_kg_k$.

The approximation matrix $B_k$ used in iteration $k$ to solve for $p_k$ is obtained by adding an
\emph{update matrix}, $U_k$, to the previous approximation matrix,
\begin{equation}\label{Bupdate}
B_k=B_{k-1}+U_k.
\end{equation}

One often considers the Cholesky factorization of $B_k$, then
\eqref{calp} can be solved in order of $n^2$ operations. Also, if in
\eqref{Bupdate} the update matrix $U_k$ is of low-rank, one does not need to compute the Cholesky factorization
of $B_k$ from scratch in each iteration, see, e.g., \cite{practicalopt}.

One of the most well-known update schemes is the one using update
matrices from the \emph{one-parameter Broyden
family of updates} \cite{broyden1967} described by
\begin{equation}\label{broyden}
U_k= \frac{Hp_{k-1}p_{k-1}^TH}{p_{k-1}^T Hp_{k-1}}-\frac{B_{k-1}p_{k-1}p_{k-1}^TB_{k-1}}{p_{k-1}^T B_{k-1}p_{k-1}}+\phi_k p_{k-1}^TB_{k-1}p_{k-1} ww^T,
\end{equation}
with 
$$
w=\frac{Hp_{k-1}}{p_{k-1}^T Hp_{k-1}}-\frac{B_{k-1}p_{k-1}}{p_{k-1}^T B_{k-1}p_{k-1}},
$$
and where $\phi_k$ is a free parameter, known as the
\emph{Broyden parameter}. 
Equation \eqref{broyden} may be written more compactly, momentarily dropping all
subscripts, as
\begin{equation}\label{broyden2}
U=\left(
\begin{array}{ll} 
\frac{1}{p^T Hp}Hp & \frac{1}{p^T Bp}Bp
\end{array} \right) 
\left(
\begin{array}{cc} 
p^THp+\varphi & -\varphi \\
-\varphi  & -p^TBp+\varphi
\end{array} \right) 
\left(
\begin{array}{c} 
\frac{1}{p^T Hp}p^TH\\ 
\frac{1}{p^T Bp}p^TB 
\end{array} \right),
\end{equation}
where $\varphi=\phi_k p^TBp$. It is common to express the
one-parameter Broyden family in terms of $y_k=g_{k+1}-g_{k}$
and $s_k=x_{k+1}-x_{k}$, see, e.g. \cite{fletcherpractical}, but as our interest is in the search directions
we prefer the equivalent form of \eqref{broyden}.

For all updates in this family, \eqref{Bupdate} has the property of
\emph{hereditary symmetry}, i.e. if $B_{k-1}$ is symmetric then $B_k$
will be symmetric. The update given by the choice $\phi_k=0$ is known as
the \emph{Broyden-Fletcher-Goldfarb-Shanno}-update, or BFGS-update 
for short. For this update, when exact linesearch is used, \eqref{Bupdate} has the property of \emph{hereditary
  positive definiteness}, i.e. if $B_{k-1} \succ 0$ then $B_{k} \succ
0$. An implication of this is that for all
updates given by $\phi_k \geq 0$, when exact linesearch is used, \eqref{Bupdate} has the property of hereditary
  positive definiteness, see,
e.g., \cite[Chapter 9]{luenberger}. Note that there are updates in the
one-parameter Broyden family for which \eqref{Bupdate} does not have
this property.

\subsection{Background results}\label{sec:background results}
In \cite{dixon}, Dixon has shown that on any smooth function,
using perfect linesearch\footnote{A generalization of exact
  linesearch for general smooth functions, see \cite{dixon}.}, the one-parameter Broyden family gives rise
to parallel search directions. On \eqref{qp}, these
search directions will in addition be mutually conjugate with respect to
$H$, see, e.g., \cite{huang}.

On \eqref{qp} it is well-known that these conjugate search directions,
generated by the one-parameter Broyden family, will be parallel to
those of CG, i.e. $p_k \myparallel p_k^{CG}$, for all $k$. See, e.g.,
\cite{fletcherpractical, huang, nazareth}. In particular Theorem 3.4.2
on page 65 of \cite{fletcherpractical} states that CG and QN using
well-defined update matrices from the one-parameter Broyden family
generate identical iterates. Note that this connection between CG and
QN does not hold for general convex functions.

In this paper we approach the well-known connection between QN
and CG from another perspective. The main question handled in this
paper is: when solving \eqref{qp}, what are the precise conditions on
$B_k$ and $U_k$ such that $p_k \myparallel p_k^{CG}$ is obtained? We
provide an answer by turning our attention to the search directions
defined by CG and the Krylov subspaces they span. In
Proposition~\ref{U_cg_2} we state explicit conditions on $U_k$ and in
Theorem~\ref{thmbroyden} we extend the well-known connection between
CG and QN as we show that, under a sufficient condition to generate
conjugate search directions, no other update matrices than those in
the one-parameter Broyden family will make $p_k \myparallel p_k^{CG}$.

\section{Results}\label{sec:results}
As a reminder for the reader we
will, in the following proposition, state the necessary and sufficient
conditions for a vector $p_k$ to be parallel to the vector
$p_k^{CG}$ given that $p_i \myparallel
p_i^{CG}$, for all $i \leq k-1$. Given $x_0$, one may calculate $g_0=Hx_0+c$.
\begin{proposition}\label{p_cg}
Let $p_0=p_0^{CG}=-g_0$ and $p_i \myparallel
p_i^{CG}$, for all $i \leq k-1$. Assume $p_k \neq 0$, then $p_k \myparallel p_k^{CG}$ if and only if
\begin{itemize}
\item[(i)] $p_k \in \mathcal{K}_{k+1}(p_0,H)$, and,
\item[(ii)] $p_k^THp_i=0, \quad  \forall i \leq k-1$.
\end{itemize}
\end{proposition}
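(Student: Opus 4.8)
The plan is to prove the two implications separately, using only the facts about CG recalled in Section~\ref{sec:cg}: the search directions $p_0^{CG},\dots,p_k^{CG}$ are mutually conjugate with respect to $H$, they are nonzero (the method not yet having terminated), and $span\{p_0^{CG},\dots,p_k^{CG}\}=\mathcal{K}_{k+1}(p_0,H)$.

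For the forward implication, suppose $p_k \myparallel p_k^{CG}$, say $p_k=\lambda p_k^{CG}$ with $\lambda\neq 0$. Then (i) is immediate, since $p_k^{CG}\in\mathcal{K}_{k+1}(p_0,H)$ and a Krylov subspace is closed under scaling. For (ii), $H$-conjugacy of the CG directions gives $(p_k^{CG})^THp_i^{CG}=0$ for $i\le k-1$; since $p_i\myparallel p_i^{CG}$ this yields $(p_k^{CG})^THp_i=0$, and multiplying by $\lambda$ gives $p_k^THp_i=0$.

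For the reverse implication, suppose (i) and (ii) hold. Because $p_i\myparallel p_i^{CG}$ for $i\le k-1$ and the $p_i^{CG}$ are linearly independent, the vectors $p_0,\dots,p_{k-1}$ are linearly independent and span $\mathcal{K}_k(p_0,H)$; adjoining $p_k^{CG}$, which is $H$-conjugate to each $p_i$ and hence linearly independent of them, produces a basis $\{p_0,\dots,p_{k-1},p_k^{CG}\}$ of $\mathcal{K}_{k+1}(p_0,H)$. By (i) we expand $p_k=\sum_{i=0}^{k-1}c_i p_i + c_k p_k^{CG}$. Fixing $j\le k-1$ and taking the $H$-inner product with $p_j$, the left-hand side vanishes by (ii), the cross terms $c_i p_i^THp_j$ ($i\neq j$) vanish by $H$-conjugacy of $p_0,\dots,p_{k-1}$, and $c_k (p_k^{CG})^THp_j$ vanishes because $p_j\myparallel p_j^{CG}$; hence $c_j\,p_j^THp_j=0$. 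Since $H\succ 0$ and $p_j\neq 0$, we conclude $c_j=0$ for every $j\le k-1$, so $p_k=c_k p_k^{CG}$; and $c_k\neq 0$, for otherwise $p_k=0$, contradicting the hypothesis. Therefore $p_k\myparallel p_k^{CG}$.

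The argument is entirely elementary linear algebra; the one place requiring care is the reverse direction, where the correct move is to expand $p_k$ in the \emph{mixed} conjugate basis $\{p_0,\dots,p_{k-1},p_k^{CG}\}$ of $\mathcal{K}_{k+1}(p_0,H)$ rather than in $\{p_0^{CG},\dots,p_k^{CG}\}$, so that condition (ii) annihilates exactly the unwanted coefficients, after which positive definiteness of $H$ and the hypothesis $p_k\neq 0$ finish the proof.
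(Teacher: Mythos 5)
Your proof is correct and follows essentially the same route as the paper's: the necessity direction is identical, and your sufficiency argument simply makes explicit (via the expansion of $p_k$ in the $H$-conjugate basis $\{p_0,\dots,p_{k-1},p_k^{CG}\}$ of $\mathcal{K}_{k+1}(p_0,H)$ and the use of $H\succ 0$) the basis-counting step that the paper states in one line. No gaps; if anything, your write-up supplies the detail the paper leaves implicit.
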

\begin{proof} 
For the sake of completeness we include the proof.

 \emph{Necessary:} Suppose $p_k$ is parallel to $p_k^{CG}$.
 Then $p_k=\delta_k p_k^{CG}$, for an arbitrary nonzero
  scalar $\delta_k$.  As $p_k^{CG}$
  satisfies (i), it holds that 
$$
p_k=\delta_k p_k^{CG} \in  \mathcal{K}_{k+1}(p_0,H),
$$
since $ \mathcal{K}_{k+1}(p_0,H)$ is a
  linear subspace.
And as $p_k^{CG}$ satisfies (ii), it follows that,
$$
p_k^THp_i=\delta_k \delta_i (p_k^{CG})^TH p_i^{CG}=0, \quad \forall i
\leq k-1.
$$

\emph{Sufficient:} Suppose $p_k$ satisfies (i) and (ii). The set of
vectors $\{p_0^{CG}, \dots, p_{k}^{CG}\}$ form an $H$-orthogonal basis for the
space $\mathcal{K}_{k+1}(p_0,H)$ and the set of vectors $\{p_0, \dots, p_{k-1}\}$ form an $H$-orthogonal basis for the
space $\mathcal{K}_{k}(p_0,H)$. Since $p_k$ satisfies (i) and (ii) it
must hold that $p_k \myparallel p_k^{CG}$.  
\end{proof}

These necessary and sufficient conditions will serve as a foundation for the rest of
our results. We will determine conditions, first on $B_k$, and
second on $U_k$ used in iteration $k$ of QN, in order for $p_k \myparallel p_k^{CG}$. 

Given the current iterate $x_k$, one may calculate
$g_k=Hx_k+c$. Hence, in iteration $k$ of QN, $p_k$ is determined from
\eqref{calp} and depends entirely 
on the choice of $B_k$. We make the assumption that \eqref{calp} is
compatible, i.e. that a solution exists. A well-known sufficient condition for $p_k$ to satisfy (ii) of
Proposition \ref{p_cg} is for $B_k$ to satisfy
\begin{equation}\label{conj}
B_kp_i=H p_i, \quad \forall i \leq k-1.
\end{equation}
This condition, known as the \emph{hereditary condition}, will be used in the following results, which implies that the stated conditions on $B_k$ and later
$U_k$ will be \emph{sufficient} conditions, and not necessary and sufficient
as those in Proposition \ref{p_cg}. All $B_k$ satisfying \eqref{conj} can be seen as defining
different conjugate direction methods, but only certain choices of
$B_k$ will generate conjugate directions parallel to those of CG. 

Assume that $B_k$ is constructed as,
\begin{equation}\label{Bconst}
B_k=I +V_k,
\end{equation}
i.e. an identity matrix plus a matrix $V_k$.\footnote{Any symmetric
  matrix can be expressed like this.} We make no assumptions on $V_k$ except symmetry. 

Let $B_0=I$, then $p_0=p_0^{CG}=-g_0$. Given $p_i \myparallel
p_i^{CG}$, for all $i \leq k-1$, the
following lemma gives sufficient
conditions on $B_k$ in order for $p_k \myparallel p_k^{CG}$.

\begin{lemma}\label{B_cg}
Let $B_0=I$, so that $p_0=p_0^{CG}=-g_0$.  Assume that $p_i \myparallel
p_i^{CG}$, for all $i \leq k-1$, and that a solution $p_k$ is obtained from
\eqref{calp}. Then $p_k \myparallel p_k^{CG}$ if $B_k$ satisfies
\begin{itemize}
\item[(i)] $\mathcal{R}(V_k) \subseteq  \mathcal{K}_{k+1}(p_0,H)$, and,
\item[(ii)] $B_kp_i=Hp_i, \quad \forall i \leq k-1$,
\end{itemize}
where $\mathcal{R}(V_k)=\{y\ : \ y=V_kx\}$, the \emph{range-space} of
the matrix $V_k$.

\end{lemma}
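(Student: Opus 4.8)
The plan is to reduce the statement directly to Proposition~\ref{p_cg}: I will show that, under (i) and (ii), the vector $p_k$ obtained from \eqref{calp} satisfies the two conditions $p_k \in \mathcal{K}_{k+1}(p_0,H)$ and $p_k^T H p_i = 0$ for all $i \le k-1$, after first disposing of the degenerate case $p_k = 0$ (which forces $g_k = -B_k p_k = 0$, so the optimum of \eqref{qp} has already been reached and there is nothing to prove). Recall also that \eqref{calp} is assumed compatible, so such a $p_k$ exists.

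Before verifying the two conditions it is convenient to record that the hypothesis $p_i \myparallel p_i^{CG}$ for $i \le k-1$, combined with the common starting point $x_0$, implies $x_i = x_i^{CG}$ for all $i \le k$ — parallel search directions under exact linesearch give identical iterates — and hence $g_k = Hx_k + c = g_k^{CG}$. For the conjugacy condition I would then use the hereditary condition (ii) to write $p_k^T H p_i = p_k^T B_k p_i$ for $i \le k-1$, exploit symmetry of $B_k$ together with \eqref{calp} to get $p_k^T B_k p_i = -g_k^T p_i$, and finish by noting that $g_k = g_k^{CG}$ is orthogonal to $p_i^{CG}$ (exact linesearch in CG) and that $p_i \myparallel p_i^{CG}$. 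For the Krylov membership I would rewrite \eqref{calp} via \eqref{Bconst} as $p_k = -g_k - V_k p_k$; the term $-g_k$ lies in $\mathcal{K}_{k+1}(p_0,H)$ because the gradients $g_0,\dots,g_k$ span that subspace, while $V_k p_k \in \mathcal{R}(V_k) \subseteq \mathcal{K}_{k+1}(p_0,H)$ by (i), and linearity of the subspace gives $p_k \in \mathcal{K}_{k+1}(p_0,H)$. With both conditions of Proposition~\ref{p_cg} in hand, $p_k \myparallel p_k^{CG}$ follows.

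Since everything is a short chain of substitutions, I do not anticipate a genuine obstacle; the one point that deserves attention is the bookkeeping across iterations — it is precisely the assumed parallelism of $p_0,\dots,p_{k-1}$ (equivalently, the equality of the iterates up to $x_k$) that lets us identify $g_k$ with the CG gradient and thereby invoke the CG orthogonality relations $g_k^T p_i^{CG} = 0$ and the fact that $g_0,\dots,g_k$ form a basis for $\mathcal{K}_{k+1}(p_0,H)$. Everything else is routine.
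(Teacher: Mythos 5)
Your proposal is correct and follows essentially the same route as the paper: reduce to Proposition~\ref{p_cg}, obtain Krylov membership from $p_k=-g_k-V_kp_k$ together with (i), and obtain conjugacy from the hereditary condition (ii). The only differences are that you spell out the standard argument ($p_k^THp_i=p_k^TB_kp_i=-g_k^Tp_i=0$, using exact linesearch and the identification $g_k=g_k^{CG}$) for the conjugacy step, which the paper simply cites as well known, and that you explicitly dispose of the degenerate case $p_k=0$; both are harmless refinements, not a different proof.
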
 
\begin{proof}
First we show that $p_k$ satisfies condition (i) of Proposition \ref{p_cg}. If
\eqref{Bconst} is inserted into \eqref{calp} one obtains,
$$
\big(I +V_k\big)p_k=-g_k,
$$
so that
$$
p_k=-g_k-V_kp_k.
$$
Since $g_k \in \mathcal{K}_{k+1}(p_0,H)$,
and $\mathcal{R}(V_k) \subseteq \mathcal{K}_{k+1}(p_0,H)$, it holds that $p_k \in \mathcal{K}_{k+1}(p_0,H)$. Hence, $p_k$
satisfies condition (i) of Proposition \ref{p_cg}. Secondly, $p_k$ satisfies condition (ii) of Proposition \ref{p_cg}
since condition (ii) of this lemma is identical to \eqref{conj}.

Hence, $p_k \myparallel p_k^{CG}$ by Proposition \ref{p_cg}.  
\end{proof}

As $B_k$ is updated according to \eqref{Bupdate}, one would prefer to
have conditions, in iteration $k$, on the update matrix $U_k$ instead of on the
entire matrix $B_k$. Therefore, we now modify Lemma \ref{B_cg} by
noting that equation \eqref{Bconst} may be stated as
\begin{equation}\label{Uconst}
B_k=B_{k-1}+U_k=I+V_{k-1} +U_k.
\end{equation}
We make no
assumptions on $U_k$ except symmetry.
Note that one
may split \eqref{conj}  as 
\begin{equation}\label{conjrest}
B_kp_{i}=Hp_{i},  \quad \forall i \leq k-2,
\end{equation}
\begin{equation}\label{conjqn}
B_kp_{k-1}=Hp_{k-1}.
\end{equation}
Equation \eqref{conjqn} is known as the \emph{quasi-Newton
  condition}.\footnote{In the literature, much emphasis is placed on that updates
should satisfy \eqref{conjqn}. This condition alone is
  not a sufficient condition on $B_k$ to give conjugate
  directions. } Using \eqref{Bupdate} one can reformulate \eqref{conjrest} and \eqref{conjqn} in terms of
$U_k$, see (ii) and (iii) of the following proposition. 

Given $B_{k-1}$ that satisfies condition (i) of
Lemma \ref{B_cg}, and given $p_i \myparallel
p_i^{CG}$, for all $i \leq k-1$, the
following proposition gives sufficient
conditions on $U_k$ in order for $p_k \myparallel p_k^{CG}$.

\begin{proposition}\label{U_cg_2}
Let $B_0=I$, so that $p_0=p_0^{CG}=-g_0$. Assume that $p_i \myparallel
p_i^{CG}$, for all $i \leq k-1$, that $B_{k-1}$ satisfies condition
(i) of Lemma \ref{B_cg}, that a solution $p_k$ is obtained from
\eqref{calp} and that $B_k$ is obtained from \eqref{Uconst}. Then
$p_k \myparallel p_k^{CG}$  if $U_k$ satisfies
\begin{itemize}
\item[(i)] $\mathcal{R}(U_k) \subseteq  \mathcal{K}_{k+1}(p_0,H)$,
\item[(ii)] $U_k p_i=0, \quad  \forall i \leq k-2$, and,
\item[(iii)] $U_kp_{k-1}=(H-B_{k-1})p_{k-1}$.
\end{itemize}
\end{proposition}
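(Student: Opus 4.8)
The plan is to reduce the statement to Lemma~\ref{B_cg}. Writing $B_k = I + V_k$ with $V_k = V_{k-1} + U_k$ as in \eqref{Uconst}, it suffices to check that $B_k$ satisfies the two hypotheses of Lemma~\ref{B_cg}, namely $\mathcal{R}(V_k) \subseteq \mathcal{K}_{k+1}(p_0,H)$ and $B_k p_i = H p_i$ for all $i \le k-1$; the conclusion $p_k \myparallel p_k^{CG}$ then follows at once from Lemma~\ref{B_cg} (equivalently, from Proposition~\ref{p_cg}). So the whole proof is a matter of translating conditions (i)--(iii) on $U_k$ into the two conditions of Lemma~\ref{B_cg} on $B_k$.

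For the range condition, I would use $\mathcal{R}(V_k) = \mathcal{R}(V_{k-1}+U_k) \subseteq \mathcal{R}(V_{k-1}) + \mathcal{R}(U_k)$. By the hypothesis that $B_{k-1}$ satisfies condition (i) of Lemma~\ref{B_cg} (read at index $k-1$) one has $\mathcal{R}(V_{k-1}) \subseteq \mathcal{K}_k(p_0,H)$, and condition (i) of the proposition gives $\mathcal{R}(U_k) \subseteq \mathcal{K}_{k+1}(p_0,H)$; since Krylov subspaces are nested, $\mathcal{K}_k(p_0,H) \subseteq \mathcal{K}_{k+1}(p_0,H)$, so the sum of the two range-spaces still lies in $\mathcal{K}_{k+1}(p_0,H)$, giving $\mathcal{R}(V_k) \subseteq \mathcal{K}_{k+1}(p_0,H)$.

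For the hereditary condition on $B_k$ I would split it exactly as in \eqref{conjrest}--\eqref{conjqn}. For the quasi-Newton index $i = k-1$, write $B_k p_{k-1} = B_{k-1}p_{k-1} + U_k p_{k-1}$ and substitute condition (iii), $U_k p_{k-1} = (H - B_{k-1})p_{k-1}$, which yields $B_k p_{k-1} = H p_{k-1}$. For $i \le k-2$, write $B_k p_i = B_{k-1}p_i + U_k p_i$; condition (ii) kills the second term, and $B_{k-1}p_i = H p_i$ is precisely the portion of the hereditary condition \eqref{conj} that $B_{k-1}$ already carries by the inductive setup. Hence $B_k p_i = H p_i$ for all $i \le k-1$, and Lemma~\ref{B_cg} applies.

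Every step here is one line, so there is no genuinely hard computation; the only point requiring care is the bookkeeping of the inductive hypothesis. In particular the argument uses not merely that $B_{k-1}$ satisfies the range condition (i) of Lemma~\ref{B_cg}, which is what the proposition states, but also that $B_{k-1}$ already reproduces $H$ on $p_0,\dots,p_{k-2}$, i.e. the hereditary condition \eqref{conj} at step $k-1$; this has to travel along as part of the inductive context in which the proposition is meant to be applied. One must also be careful to invoke the \emph{nesting} $\mathcal{K}_k(p_0,H) \subseteq \mathcal{K}_{k+1}(p_0,H)$, rather than equality, when merging the range of $V_{k-1}$ (which sits only in $\mathcal{K}_k(p_0,H)$) with the range of $U_k$.
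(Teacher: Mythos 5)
Your proof is correct and takes essentially the same route as the paper: the paper likewise reduces to the argument of Lemma~\ref{B_cg}, obtaining $p_k\in\mathcal{K}_{k+1}(p_0,H)$ from $p_k=-g_k-V_{k-1}p_k-U_kp_k$ together with the nesting $\mathcal{K}_{k}(p_0,H)\subseteq\mathcal{K}_{k+1}(p_0,H)$, and then reading conditions (ii)--(iii) as a reformulation of the hereditary condition \eqref{conj}. Your observation that one additionally needs $B_{k-1}p_i=Hp_i$ for $i\le k-2$ as part of the inductive context is well taken --- the paper leaves this implicit when it calls (ii) and (iii) ``merely a reformulation of \eqref{conj}'', and it is supplied in practice by the subsequent corollary's assumption that $U_i$ satisfies (i)--(iii) in every iteration.
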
 

\begin{proof}
The proof is identical to the one of Lemma \ref{B_cg}. If
\eqref{Uconst} is inserted into \eqref{calp} one obtains
$$
p_k=-g_k-V_{k-1}p_k-U_kp_k.
$$

By condition (i) of Lemma \ref{B_cg}, $\mathcal{R}(V_{k-1}) \subseteq \mathcal{K}_{k}(p_0,H)$, and since \linebreak$\mathcal{K}_{k}(p_0,H) \subseteq
\mathcal{K}_{k+1}(p_0,H)$, it follows that $V_{k-1}p_k \in \mathcal{K}_{k+1}(p_0,H)$.
Therefore, since $g_k \in \mathcal{K}_{k+1}(p_0,H)$ and $\mathcal{R}(U_k) \subseteq 
\mathcal{K}_{k+1}(p_0,H)$, it holds that $p_k \in
\mathcal{K}_{k+1}(p_0,H)$. Hence, $p_k$ satisfies condition (i) of Proposition \ref{p_cg}.

Conditions (ii) and (iii) of this lemma are merely a reformulation of \eqref{conj}, hence $p_k$ satisfies condition (ii) of Proposition \ref{p_cg}. 

Hence, $p_k \myparallel p_k^{CG}$ by Proposition \ref{p_cg}.  
\end{proof}

The assumption of the previous proposition, that $B_{k-1}$ is chosen to satisfy condition (i) of Lemma \ref{B_cg},
will be satisfied if, in each iteration $k$, the update matrix $U_k$ is chosen
according to conditions (i)-(iii) of Proposition \ref{U_cg_2}. This is summarized in the
following corollary.

\begin{corollary}
If in each iteration $k$ the update matrix $U_k$ is chosen to satisfy
conditions (i)-(iii) of Proposition \ref{U_cg_2}, then $p_k \myparallel
p_k^{CG}$ for all $k$.
\end{corollary}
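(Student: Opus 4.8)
The plan is to argue by induction on the iteration index $k$, the only substantive point being that the hypothesis of Proposition~\ref{U_cg_2} requiring $B_{k-1}$ to satisfy condition~(i) of Lemma~\ref{B_cg} is automatically inherited once every update matrix $U_j$ obeys conditions~(i)--(iii).

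For the base case $k=0$: since $B_0=I$ we have $V_0=0$, so $\mathcal{R}(V_0)=\{0\}\subseteq\mathcal{K}_1(p_0,H)$ trivially, and $p_0=-g_0=p_0^{CG}$, hence $p_0 \myparallel p_0^{CG}$. For the inductive step, suppose $p_i \myparallel p_i^{CG}$ for all $i\leq k-1$ and that each $U_j$ with $j\leq k$ was chosen according to (i)--(iii) of Proposition~\ref{U_cg_2}. Unrolling \eqref{Uconst} and using $V_0=0$ gives $V_{k-1}=\sum_{j=1}^{k-1}U_j$, so $\mathcal{R}(V_{k-1})\subseteq\sum_{j=1}^{k-1}\mathcal{R}(U_j)$. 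Condition~(i) applied in iteration $j$ gives $\mathcal{R}(U_j)\subseteq\mathcal{K}_{j+1}(p_0,H)$, and since Krylov subspaces are nested, $\mathcal{K}_{j+1}(p_0,H)\subseteq\mathcal{K}_{k}(p_0,H)$ for every $j\leq k-1$. Hence $\mathcal{R}(V_{k-1})\subseteq\mathcal{K}_{k}(p_0,H)$, which is exactly condition~(i) of Lemma~\ref{B_cg} for $B_{k-1}$.

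With this in hand, all hypotheses of Proposition~\ref{U_cg_2} are met: the induction hypothesis supplies $p_i \myparallel p_i^{CG}$ for $i\leq k-1$, we have just shown $B_{k-1}$ satisfies condition~(i) of Lemma~\ref{B_cg}, a solution $p_k$ of \eqref{calp} is assumed to exist, $B_k$ is formed via \eqref{Uconst}, and $U_k$ satisfies (i)--(iii) by hypothesis. Proposition~\ref{U_cg_2} then yields $p_k \myparallel p_k^{CG}$, completing the induction. The only step requiring any care is the range inclusion $\mathcal{R}(V_{k-1})\subseteq\mathcal{K}_{k}(p_0,H)$, i.e.\ recognizing that the cumulative deviation $V_{k-1}$ is the sum of the individual updates $U_j$ and that the range of a sum of matrices is contained in the sum of their ranges; everything else is a direct appeal to the results already established.
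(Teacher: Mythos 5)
Your proof is correct and follows essentially the argument the paper intends: the corollary is justified there only by the preceding remark that choosing each $U_j$ according to (i)--(iii) makes $B_{k-1}$ satisfy condition (i) of Lemma~\ref{B_cg}, and your induction, with $V_{k-1}=\sum_{j=1}^{k-1}U_j$ and the nestedness of the Krylov subspaces, is exactly the formalization of that remark. No gaps; the only implicit hypothesis you correctly carry along is that \eqref{calp} remains compatible in each iteration.
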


Next we state a result, which will be needed in our further
investigation, and holds for any update scheme of $B_k$, which generates search
directions parallel to those of CG. 

\begin{proposition}\label{Bdefined}
If $p_k \myparallel
p_k^{CG}$ for all $k$ and a solution $p_k$ obtained from
\eqref{calp}, then $p_{k}^TB_{k}p_{k} \neq
0$, unless $B_{k}p_{k}=-g_{k}=\nolinebreak 0$.
\end{proposition}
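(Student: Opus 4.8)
\emph{Proof proposal.} The plan is to collapse the quadratic form $p_k^TB_kp_k$ into a scalar multiple of $\|g_k\|^2$ by using the defining equation \eqref{calp} together with the parallelism hypothesis. First, since $p_k$ solves \eqref{calp}, i.e.\ $B_kp_k=-g_k$, left-multiplying by $p_k^T$ gives at once
$$
p_k^TB_kp_k=-p_k^Tg_k .
$$
Hence it suffices to show that $p_k^Tg_k\neq 0$ unless $g_k=0$, and to observe separately that $g_k=0$ forces $B_kp_k=-g_k=0$, which is precisely the exception allowed in the statement.

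Next I would bring in the hypothesis $p_k\myparallel p_k^{CG}$ at the current index $k$: write $p_k=\delta_k p_k^{CG}$ with $\delta_k\neq 0$, so that $p_k^Tg_k=\delta_k\,(p_k^{CG})^Tg_k$. Here $g_k$ is unambiguous: since $p_i\myparallel p_i^{CG}$ for all $i\le k-1$ and exact linesearch is used, the CG and QN iterates coincide up to $x_k$, so $g_k=Hx_k+c$ is the same gradient in both methods. Now invoke the standard CG relations recalled in Section~\ref{sec:cg}: from \eqref{cgdir2}, $p_k^{CG}=-g_k+\beta_{k-1}p_{k-1}^{CG}$, and exact linesearch gives $g_k^Tp_{k-1}^{CG}=0$, whence $(p_k^{CG})^Tg_k=-g_k^Tg_k=-\|g_k\|^2$. (Equivalently one may simply cite the fact, already noted in the excerpt, that CG search directions are descent directions, so $(p_k^{CG})^Tg_k<0$ whenever $g_k\neq 0$.)

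Combining the two displays, $p_k^TB_kp_k=-p_k^Tg_k=\delta_k\|g_k\|^2$. Since $\delta_k\neq 0$, this is nonzero precisely when $g_k\neq 0$; and if $g_k=0$ then $B_kp_k=-g_k=0$, which is the stated exceptional case. This proves the proposition, and it does so for \emph{any} update scheme generating CG-parallel directions, with no assumption on well-definedness of a particular update family.

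I do not anticipate a genuine obstacle: once the CG descent relation $(p_k^{CG})^Tg_k=-\|g_k\|^2$ is in hand, the argument is a two-line computation. The only points needing a moment's care are (a) checking that the boundary case $g_k=0$ degenerates exactly to the exception $B_kp_k=-g_k=0$ rather than to a real failure of the conclusion, and (b) being explicit that the parallelism hypothesis is used at index $k$ itself so that $\delta_k$ is defined and nonzero, and that it is used at all earlier indices only to identify the gradient $g_k$ common to both methods.
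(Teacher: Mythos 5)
Your proposal is correct and follows essentially the same route as the paper's own proof: left-multiply \eqref{calp} by $p_k^T$ to get $p_k^TB_kp_k=-p_k^Tg_k=-\delta_k(p_k^{CG})^Tg_k$, then use the fact that $p_k^{CG}$ is a descent direction (equivalently, $(p_k^{CG})^Tg_k=-\|g_k\|^2$) to conclude this is nonzero unless $g_k=0$. Your extra care about the degenerate case $g_k=0$ and the explicit identification of the common gradient is a slightly more detailed write-up of the same argument.
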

\begin{proof}
Since $p_k=\delta_k p_k^{CG}$, for some non-zero scalar $\delta_k$,
and since $p_k^{CG}$ is a descent direction it holds that
$$
p_{k}^TB_{k}p_{k}=-p^T_{k}g_{k}=-\delta_k (p_k^{CG})^Tg_k \neq 0,
$$
for some non-zero scalar $\delta_k$, unless $g_k=0$.  
\end{proof}

Note that this implies that \eqref{nonorth} is
satisfied for any QN method using an update scheme that generates
search directions that are parallel to those of CG. Also, assuming $g_k
\neq 0$, it implies that the fraction $\frac{1}{p_{k}^TB_{k}p_{k}}$ is well-defined.

\subsection{Update matrices defined by Proposition
  \ref{U_cg_2}}\label{sec:U matrix}

Having stated precise conditions on $U_k$ in Proposition \ref{U_cg_2}
we now turn to look at what these conditions imply
in terms of actual update matrices. In Theorem \ref{thmbroyden}, we show
that the conditions on $U_k$ in Proposition \ref{U_cg_2} are
equivalent to the matrix $U_k$ belonging to the one-parameter
Broyden family, \eqref{broyden2}. This implies that, under the
sufficient condition \eqref{conj}, there are no other update matrices,
even of higher rank, that make $p_k \myparallel p_k^{CG}$.

The if-direction of Theorem \ref{thmbroyden}, the fact that the one-parameter
Broyden family satisfies the conditions of Proposition \ref{U_cg_2} is
straightforward. However, the only if-directions
shows that there are no update matrices outside this family that satisfy the conditions of Proposition \ref{U_cg_2}.

\begin{theorem}\label{thmbroyden}
Assume $g_k \neq 0$. A matrix $U_k$ satisfies (i)-(iii) of Proposition \ref{U_cg_2}  if and only if $U_k$
can be expressed according to \eqref{broyden2}, the one-parameter
Broyden family, for some $\phi_k$.
\end{theorem}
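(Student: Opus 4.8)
The plan is to prove both implications by controlling the range space of $U_k$. The \emph{if}-direction is a direct verification: writing the compact form \eqref{broyden2} of the Broyden update as $U_k = W M W^T$ with
$$
W = \begin{pmatrix} \dfrac{H p_{k-1}}{p_{k-1}^T H p_{k-1}} & \dfrac{B_{k-1} p_{k-1}}{p_{k-1}^T B_{k-1} p_{k-1}} \end{pmatrix},
\qquad
M = \begin{pmatrix} p_{k-1}^T H p_{k-1} + \varphi & -\varphi \\ -\varphi & -p_{k-1}^T B_{k-1} p_{k-1} + \varphi \end{pmatrix},
$$
one checks (i) from $H p_{k-1}\in\mathcal{K}_{k+1}(p_0,H)$, since $p_{k-1}\in\mathcal{K}_k(p_0,H)$, together with $B_{k-1}p_{k-1}=-g_{k-1}\in\mathcal{K}_k(p_0,H)$ by \eqref{calp} at iteration $k-1$; (ii) from $W^T p_i = 0$ for $i\le k-2$, which follows from $H$-conjugacy of the search directions and the exact-linesearch relations $g_{k-1}^T p_i=0$; and (iii) from $W^T p_{k-1}=(1,1)^T$, so that $U_k p_{k-1} = W M (1,1)^T = H p_{k-1} - B_{k-1}p_{k-1}$. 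Here $p_{k-1}^T H p_{k-1}>0$ since $H\succ0$ and $p_{k-1}\neq0$, and $p_{k-1}^T B_{k-1} p_{k-1}\neq0$ by Proposition \ref{Bdefined}, so $W$, $M$ and $\phi_k=\varphi/(p_{k-1}^T B_{k-1} p_{k-1})$ are well defined.

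For the \emph{only if}-direction I would first reduce conditions (i)--(ii). Since $U_k=U_k^T$, condition (ii) is equivalent to $\mathcal{R}(U_k)\perp\mathrm{span}\{p_0,\dots,p_{k-2}\}=\mathcal{K}_{k-1}(p_0,H)$, so (i) and (ii) together say exactly that $\mathcal{R}(U_k)\subseteq\mathcal{S}:=\mathcal{K}_{k+1}(p_0,H)\cap\mathcal{K}_{k-1}(p_0,H)^\perp$. The hard part is to pin down $\mathcal{S}$: I would show $\dim\mathcal{S}=2$ with $\mathcal{S}=\mathrm{span}\{H p_{k-1},\,B_{k-1}p_{k-1}\}$. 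Because the QN and CG iterates agree through step $k$, the gradients $g_0,\dots,g_k$ form an orthogonal basis of $\mathcal{K}_{k+1}(p_0,H)$ and $g_0,\dots,g_{k-2}$ a basis of $\mathcal{K}_{k-1}(p_0,H)$, whence $\mathcal{S}=\mathrm{span}\{g_{k-1},g_k\}$, which is two-dimensional since $g_{k-1}\neq0$ (otherwise $p_{k-1}^{CG}$ would vanish by \eqref{cgdir2}) and $g_k\neq0$ (the hypothesis). Both $H p_{k-1}$ and $B_{k-1}p_{k-1}=-g_{k-1}$ lie in $\mathcal{S}$, as in the if-direction, and they are linearly independent: if $H p_{k-1}$ were proportional to $g_{k-1}$ then $g_k=g_{k-1}+\alpha_{k-1}H p_{k-1}\in\mathcal{K}_k(p_0,H)$, contradicting $g_k\perp\mathcal{K}_k(p_0,H)$ and $g_k\neq0$.

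It then remains to parametrize and impose (iii). Since $U_k=U_k^T$ has $\mathcal{R}(U_k)\subseteq\mathcal{S}=\mathcal{R}(W)$ with $W$ of full column rank, there is a unique symmetric $2\times2$ matrix $M=\bigl(\begin{smallmatrix} a & b\\ b & d\end{smallmatrix}\bigr)$ with $U_k=WMW^T$; concretely $W M W^T = P_{\mathcal{S}} U_k P_{\mathcal{S}} = U_k$, where $P_{\mathcal{S}}$ is the orthogonal projector onto $\mathcal{S}$, using $P_{\mathcal{S}}U_k=U_k$ and $U_k P_{\mathcal{S}}=(P_{\mathcal{S}}U_k)^T=U_k$. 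Using $W^T p_{k-1}=(1,1)^T$ and the linear independence of $H p_{k-1}, B_{k-1}p_{k-1}$, condition (iii) $W M (1,1)^T = H p_{k-1}-B_{k-1}p_{k-1}$ is equivalent to $a+b=p_{k-1}^T H p_{k-1}$ and $b+d=-p_{k-1}^T B_{k-1} p_{k-1}$; setting $\varphi:=-b$ recovers exactly the matrix $M$ displayed above, so $U_k$ has the form \eqref{broyden2} with $\phi_k=\varphi/(p_{k-1}^T B_{k-1} p_{k-1})$.

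The main obstacle is the identification $\mathcal{S}=\mathrm{span}\{H p_{k-1},\,B_{k-1}p_{k-1}\}$ with $\dim\mathcal{S}=2$: this is where the CG-specific Krylov structure (the gradients forming orthogonal bases of the successive subspaces, the coincidence of the QN and CG iterates, and the relation between $H p_{k-1}$ and $g_k-g_{k-1}$) must be brought to bear, and it is precisely here that $g_k\neq0$ is essential — when $g_k=0$ the vectors $H p_{k-1}$ and $B_{k-1}p_{k-1}$ become parallel, $\mathcal{S}$ collapses to a line, and the parametrization above no longer singles out a Broyden parameter, foreshadowing the degenerate case treated later in the paper.
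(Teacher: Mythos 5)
Your proposal is correct and follows essentially the same route as the paper: reduce (i)--(ii) to $\mathcal{R}(U_k)\subseteq\mathcal{K}_{k-1}(p_0,H)^\perp\cap\mathcal{K}_{k+1}(p_0,H)=\mathrm{span}\{g_{k-1},g_k\}$ via symmetry of $U_k$, parametrize $U_k$ by a symmetric $2\times2$ matrix, and let the quasi-Newton condition (iii) fix two of the three free entries, leaving the single Broyden parameter. The only cosmetic difference is that you work directly in the $\{Hp_{k-1},B_{k-1}p_{k-1}\}$ factorization (with a slightly more explicit projector argument for the existence and uniqueness of $M$), whereas the paper first writes $U_k$ in the $\{g_{k-1},g_k\}$ basis and then changes basis via the linear relation between the two pairs.
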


\begin{proof}
Note that since $\mathcal{K}_{k-1}(p_0,H)=span\{p_0,
\dots, p_{k-2}\}$, condition (ii) of
Proposition \ref{U_cg_2} can be stated as
$$
\mathcal{K}_{k-1}(p_0,H) \subseteq \mathcal{N}(U_k),
$$
where $\mathcal{N}(U_k)=\{x\ : \ U_kx=0\}$, the \emph{null-space} of $U_k$. This implies that $\dim(\mathcal{N}(U_k)) \geq k-1$. Since $U_k$
is symmetric and applying condition (i) of
Proposition \ref{U_cg_2} it follows that
$$
\mathcal{R}(U_k) \subseteq \mathcal{K}_{k-1}(p_0,H)^{\perp} \cap
\mathcal{K}_{k+1}(p_0,H)=span\{g_{k-1}, g_k\},
$$
and that $\dim(\mathcal{R}(U_k^T))=\dim(\mathcal{R}(U_k)) \leq 2$.

Hence, one may write a
general $U_k$ that satisfies the conditions (i) and (ii) of Proposition \ref{U_cg_2} as
\begin{equation}\label{broyden3}
U_k=\left(
\begin{array}{ll} 
g_{k-1} & g_k
\end{array} \right) 
\left(
\begin{array}{cc} 
m_{1,1} & m_{1,2}\\
m_{1,2} & m_{2,2}
\end{array} \right)
\left(
\begin{array}{c} 
g_{k-1}^T\\ 
g_k^T 
\end{array} \right).
\end{equation}

Note that due to the linear relationship between $\{
B_{k-1}p_{k-1}, Hp_{k-1}\}$  and $\{g_{k-1}, g_k\}$ it holds that
\begin{equation*}
g_{k-1}=-B_{k-1}p_{k-1}, \quad g_k=\alpha_{k-1}Hp_{k-1}+g_{k-1}.
\end{equation*}
Hence, since $\alpha_{k-1}=(p_{k-1}^T B_{k-1} p_{k-1})/(p_{k-1}^T H
p_{k-1})$ by the exact linesearch, dropping all $'k-1'$-subscripts and
letting $g_k$ be represented by $g_+$, we obtain
\begin{equation}\label{rel1}
\left(
\begin{array}{c} 
g^T \\
g_+^T
\end{array} \right)
= p^TBp\left(
\begin{array}{cc} 
0 & \ -1\\
1 & \ -1
\end{array} \right)
\left(
\begin{array}{c} 
\frac{1}{p^T Hp}p^TH\\ 
\frac{1}{p^T Bp}p^TB 
\end{array} \right).
\end{equation}
We may therefore rewrite \eqref{broyden3} as
\begin{equation}\label{broyden4}
U_k=\left(
\begin{array}{ll} 
\frac{1}{p^T Hp}Hp & \frac{1}{p^T Bp}Bp
\end{array} \right)  
\left(
\begin{array}{cc} 
\hat{m}_{1,1} & \hat{m}_{1,2}\\
\hat{m}_{1,2} & \hat{m}_{2,2}
\end{array} \right)
\left(
\begin{array}{c} 
\frac{1}{p^T Hp}p^TH\\ 
\frac{1}{p^T Bp}p^TB 
\end{array} \right).
\end{equation}

Now imposing condition (iii) of Proposition \ref{U_cg_2}, 
\eqref{broyden4} yields
\begin{equation}\label{relUp}
U_kp=\left(
\begin{array}{ll} 
\frac{1}{p^T Hp}Hp & \frac{1}{p^T Bp}Bp
\end{array} \right)  
\left(
\begin{array}{cc} 
\hat{m}_{1,1} & \hat{m}_{1,2}\\
\hat{m}_{1,2} & \hat{m}_{2,2}
\end{array} \right)
\left(
\begin{array}{c} 
1\\ 
1 
\end{array} \right)=Hp-Bp.
\end{equation}
A combination of Proposition~\ref{Bdefined} and \eqref{rel1} shows that
$Hp$ and $Bp$ are linearly independent. Hence, \eqref{relUp} implies
that
$$
\hat{m}_{1,1}+\hat{m}_{1,2}=p^THp , \quad \hat{m}_{1,2}+\hat{m}_{2,2}=-p^TBp.
$$
With $\hat{m}_{1,2}=-\varphi$, then 
$$
\hat{m}_{1,1}=p^THp+\varphi , \quad \hat{m}_{2,2}=-p^TBp+\varphi.
$$
Substituting into \eqref{broyden4}, we obtain \eqref{broyden2}
with the scaling $\varphi=\phi_k p^TBp$. This completes the proof.  
\end{proof}

The precise conditions of Proposition \ref{U_cg_2} are equivalent to choosing
$U_k$ from the one-parameter Broyden family. As seen in the proof, the
conditions (i)-(iii) of Proposition \ref{U_cg_2} may be expressed as 
\begin{itemize}
\item[(a)]$\mathcal{R}(U_k) \subseteq  span\{g_{k-1}, g_k\}=\mathcal{K}_{k-1}(p_0,H)^{\perp} \cap
\mathcal{K}_{k+1}(p_0,H)$, and,
\item[(b)]$U_kp_{k-1}=(H-B_{k-1})p_{k-1}$.
\end{itemize}

Note that after we impose (a) we have three degrees of
freedom, $\hat{m}_{1,1}$, $\hat{m}_{1,2}$ and $\hat{m}_{2,2}$. Then after
imposing (b), the \emph{quasi-Newton condition}, we are left with one
degree of freedom $\phi_k$.

What is often mentioned as a feature of
the one-parameter Broyden family, using information only from the
current and previous iteration when forming $U_k$, is
in fact a condition that guarantees the equivalence to CG.  Also, the fact
that the update matrices in \eqref{broyden2} are of rank at most two is a
consequence of satisfying this condition. This is what distinguishes the one-parameter
Broyden family of updates from any $B_k$ satisfying
\eqref{conj}. 

We stress that the fact that CG and QN, using a well-defined update
matrix from the one-parameter Broyden family, generates parallel
search directions and hence identical iterates is well-known. By
Proposition~\ref{U_cg_2} and Theorem~\ref{thmbroyden} we may draw the additional
conclusion that, under the
sufficient condition \eqref{conj}, there are no other update matrices,
even of higher rank, that make $p_k \myparallel p_k^{CG}$.

\subsection{Relation between $\delta_k$ and
  $\phi_k$}\label{sec:relation}
Next we derive a relation between the free parameter $\phi_k$, the Broyden
parameter used in $U_k$ when forming $B_k$ according to \eqref{Bupdate}, and the non-zero
parameter $\delta_k$ in $p_k=\delta_k p_k^{CG}$. In the following lemma we state an alternative way to express $U_k$
belonging to the one-parameter Broyden family \eqref{broyden2}. 

\begin{lemma}\label{U_g}
An update matrix $U_k$ from the one-parameter Broyden family
\eqref{broyden2} can be expressed on the form
\begin{equation}\label{broyden5}
U_k=\left(
\begin{array}{ll} 
g_{k-1} & g_k
\end{array} \right) 
\left(
\begin{array}{cc} 
\frac{p^THp}{(p^TBp)^2} -\frac{1}{p^TBp}& -\frac{p^THp}{(p^TBp)^2}\\
-\frac{p^THp}{(p^TBp)^2}& \frac{p^THp}{(p^TBp)^2}+\frac{\phi_k}{p^TBp}
\end{array} \right)
\left(
\begin{array}{c} 
g_{k-1}^T\\ 
g_k^T 
\end{array} \right),
\end{equation}
where all omitted subscripts
are $'k-1'$.
\end{lemma}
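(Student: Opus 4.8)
The plan is to start from the compact form \eqref{broyden2} of a one-parameter Broyden update and merely change spanning vectors in its outer (column) factor and its inner (row) factor, replacing the pair $\{\tfrac{1}{p^THp}Hp,\ \tfrac{1}{p^TBp}Bp\}$ by the pair $\{g_{k-1},g_k\}$. The link between the two pairs is exactly relation \eqref{rel1} from the proof of Theorem~\ref{thmbroyden} (with all omitted subscripts $'k-1'$ and $g_k$ written $g_+$). The constant $2\times2$ matrix $\left(\begin{smallmatrix}0&-1\\1&-1\end{smallmatrix}\right)$ occurring there has determinant $1$, so \eqref{rel1} is invertible; by inverting it and also inverting its transpose one gets
$$
\begin{pmatrix}\tfrac{1}{p^THp}p^TH\\ \tfrac{1}{p^TBp}p^TB\end{pmatrix}=\frac{1}{p^TBp}\begin{pmatrix}-1&1\\-1&0\end{pmatrix}\begin{pmatrix}g_{k-1}^T\\ g_k^T\end{pmatrix},
$$
$$
\begin{pmatrix}\tfrac{1}{p^THp}Hp&\tfrac{1}{p^TBp}Bp\end{pmatrix}=\frac{1}{p^TBp}\begin{pmatrix}g_{k-1}&g_k\end{pmatrix}\begin{pmatrix}-1&-1\\1&0\end{pmatrix}.
$$

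Next I would substitute these two identities into \eqref{broyden2}. The factors $\bigl(g_{k-1}\ g_k\bigr)$ and $\bigl(g_{k-1}\ g_k\bigr)^T$ then appear on the outside, and the surviving $2\times2$ matrix is
$$
M=\frac{1}{(p^TBp)^2}\begin{pmatrix}-1&-1\\1&0\end{pmatrix}\begin{pmatrix}p^THp+\varphi&-\varphi\\-\varphi&-p^TBp+\varphi\end{pmatrix}\begin{pmatrix}-1&1\\-1&0\end{pmatrix},
$$
with $\varphi=\phi_k\,p^TBp$ as in \eqref{broyden2}. Multiplying out the three factors is routine: the product of the last two equals $\left(\begin{smallmatrix}-p^THp& p^THp+\varphi\\ p^TBp&-\varphi\end{smallmatrix}\right)$, and left-multiplying by $\left(\begin{smallmatrix}-1&-1\\1&0\end{smallmatrix}\right)$ gives $M=\frac{1}{(p^TBp)^2}\left(\begin{smallmatrix}p^THp-p^TBp&-p^THp\\-p^THp&p^THp+\varphi\end{smallmatrix}\right)$. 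Finally I would distribute the scalar $1/(p^TBp)^2$ over the entries and replace $\varphi=\phi_k p^TBp$: the $(1,1)$ entry becomes $\tfrac{p^THp}{(p^TBp)^2}-\tfrac{1}{p^TBp}$, the off-diagonal entries become $-\tfrac{p^THp}{(p^TBp)^2}$, and the $(2,2)$ entry becomes $\tfrac{p^THp}{(p^TBp)^2}+\tfrac{\phi_k}{p^TBp}$, which is precisely the middle matrix of \eqref{broyden5}; restoring the $'k-1'$ subscripts on $p$ and $B$ completes the identification.

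There is no genuine obstacle in this argument. The only two points worth a word are that the scalars $p^THp$ and $p^TBp$ in the denominators are nonzero — the former because $H\succ0$, the latter by Proposition~\ref{Bdefined} together with the standing assumption $g_k\neq0$ — and that $\{g_{k-1},g_k\}$ is a legitimate pair to substitute; its linear independence (noted in the proof of Theorem~\ref{thmbroyden} as a consequence of Proposition~\ref{Bdefined} and \eqref{rel1}) even shows that \eqref{broyden5} is the \emph{unique} representation of $U_k$ over $\{g_{k-1},g_k\}$. The place where care is actually needed is the $2\times2$ matrix arithmetic — the two inversions and keeping the order of factors straight under transposition — so I would double-check the sign patterns against \eqref{rel1}.
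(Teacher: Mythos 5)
Your proposal is correct and follows exactly the paper's own route: invert relation \eqref{rel1} to obtain \eqref{rel2} (and its transpose), substitute into \eqref{broyden2}, and multiply out the $2\times 2$ factors; your explicit arithmetic, including the intermediate matrix $M$, checks out. The paper's proof is just a terser version of the same computation, so there is nothing to add.
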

\begin{proof}
Reversing relation \eqref{rel1} gives
\begin{equation}\label{rel2}
\left(
\begin{array}{c} 
\frac{1}{p^T Hp}p^TH \\
\frac{1}{p^T Bp}p^TB 
\end{array} \right)
= \frac{1}{p^TBp}\left(
\begin{array}{cc} 
-1 & \ 1\\
-1 & \ 0
\end{array} \right)
\left(
\begin{array}{c} 
g_{k-1}^T\\ 
g_k^T
\end{array} \right).
\end{equation}
If \eqref{rel2} is inserted in \eqref{broyden2} with
$\varphi=\phi_k p^TBp$, then \eqref{broyden5} is obtained.
\end{proof}

In the following
proposition we show the one-to-one correspondence between the two
parameters $\phi_k$ and $\delta_k$, 

\begin{proposition}\label{phi delta}
Let $B_0=I$, so that $p_0=p_0^{CG}=-g_0$. Assume that $p_i \myparallel
p_i^{CG}$, for all $i \leq k-1$, that $B_{k-1}$ satisfies condition
(i) of Lemma \ref{B_cg}, that a solution $p_k$ is obtained from
\eqref{calp}, that $B_k$ is obtained from \eqref{Uconst} and that
$U_k$ belongs to the one-parameter Broyden family, \eqref{broyden2},
for some $\phi_k$, where $\phi_k \neq -(p_{k-1}^TB_{k-1}p_{k-1})/(g_k^Tg_k)$. 

Then the non-zero parameter $\delta_k$ in $p_k=\delta_k p_k^{CG}$ is
given by
\begin{equation}\label{deltabroyden}
\delta_k(\phi_k)=\frac{1}{1+\phi_k\frac{g_k^Tg_k}{p_{k-1}^TB_{k-1}p_{k-1}}}.
\end{equation}
\end{proposition}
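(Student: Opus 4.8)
The plan is to write the quasi-Newton direction $p_k$ in two ways and equate a single scalar. On one hand $p_k$ solves \eqref{calp}, $B_kp_k=-g_k$; on the other hand, by Proposition~\ref{U_cg_2} together with Theorem~\ref{thmbroyden}, $p_k \myparallel p_k^{CG}$, and the direction of $p_k^{CG}$ is completely determined by the conjugate gradient recurrence. Comparing the two expressions pins down $\delta_k$.

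\emph{The CG side.} From $p_k^{CG}=-g_k+\beta_{k-1}^{CG}p_{k-1}^{CG}$ with $\beta_{k-1}^{CG}=g_k^Tg_k/(g_{k-1}^Tg_{k-1})$ and $p_{k-1}^{CG}=p_{k-1}/\delta_{k-1}$, one gets $p_k^{CG}=-g_k+\bigl(g_k^Tg_k/(\delta_{k-1}\,g_{k-1}^Tg_{k-1})\bigr)p_{k-1}$. The factor $\delta_{k-1}\,g_{k-1}^Tg_{k-1}$ simplifies nicely: since $(p_{k-1}^{CG})^Tg_{k-1}=-g_{k-1}^Tg_{k-1}$ (immediate from the recurrence for $p_{k-1}^{CG}$ and $g_{k-1}^Tp_j^{CG}=0$, $j\le k-2$), we have $p_{k-1}^Tg_{k-1}=-\delta_{k-1}\,g_{k-1}^Tg_{k-1}$, and also $p_{k-1}^Tg_{k-1}=-p_{k-1}^TB_{k-1}p_{k-1}$ by \eqref{calp} at step $k-1$. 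Writing $\sigma:=p_{k-1}^TB_{k-1}p_{k-1}$, this gives $p_k^{CG}=-\bigl(g_k-(g_k^Tg_k/\sigma)\,p_{k-1}\bigr)$, hence $p_k=\delta_kp_k^{CG}=-\delta_k\bigl(g_k-(g_k^Tg_k/\sigma)\,p_{k-1}\bigr)$.

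\emph{The QN side.} Next I would substitute this into $B_kp_k=-g_k$ and take the inner product with $g_k$, so it suffices to evaluate $\delta_k^{-1}\,g_k^Tg_k=g_k^TB_kg_k-(g_k^Tg_k/\sigma)\,g_k^TB_kp_{k-1}$. Three scalars feed into this. First, $g_k^TB_kp_{k-1}=g_k^THp_{k-1}$ by the quasi-Newton condition $B_kp_{k-1}=Hp_{k-1}$ (condition (iii) of Proposition~\ref{U_cg_2}), and $g_k^THp_{k-1}=g_k^Tg_k/\alpha_{k-1}$ because $Hp_{k-1}=(g_k-g_{k-1})/\alpha_{k-1}$ and $g_k^Tg_{k-1}=0$, where $\alpha_{k-1}=\sigma/(p_{k-1}^THp_{k-1})$ by \eqref{steplength} and \eqref{calp}. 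Second, $g_k^TU_kg_k$ is read off Lemma~\ref{U_g} using $g_{k-1}^Tg_k=0$: it equals $(g_k^Tg_k)^2\bigl(p_{k-1}^THp_{k-1}/\sigma^2+\phi_k/\sigma\bigr)$. Third — the only delicate point — $g_k^TB_{k-1}g_k=g_k^Tg_k$: writing $B_{k-1}=I+V_{k-1}$ with $\mathcal{R}(V_{k-1})\subseteq\mathcal{K}_k(p_0,H)$ (condition (i) of Lemma~\ref{B_cg}) and $\mathcal{K}_k(p_0,H)=\mathrm{span}\{g_0,\dots,g_{k-1}\}$, which is orthogonal to $g_k$, the cross term $g_k^TV_{k-1}g_k$ vanishes.

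\emph{Assembling.} Inserting the three scalars, the two contributions proportional to $p_{k-1}^THp_{k-1}$ cancel and one is left with $\delta_k^{-1}\,g_k^Tg_k=g_k^Tg_k+\phi_k(g_k^Tg_k)^2/\sigma$, which is exactly \eqref{deltabroyden} after dividing by $g_k^Tg_k$; the hypothesis $\phi_k\neq-\sigma/(g_k^Tg_k)$ is precisely what keeps $1+\phi_k g_k^Tg_k/\sigma$ nonzero, hence $\delta_k$ finite and nonzero, as it must be since $p_k$ and $p_k^{CG}$ are nonzero and parallel. The main obstacle is the middle step: $B_{k-1}g_k$ is not available in closed form, and the device that sidesteps it is to project onto $g_k$ and exploit that $B_{k-1}$ differs from the identity only on $\mathcal{K}_k(p_0,H)$, which is $g_k$-orthogonal — the same structural fact underlying Lemma~\ref{B_cg} and Proposition~\ref{U_cg_2}. (Alternatively one can apply the Sherman--Morrison--Woodbury identity to $B_k^{-1}g_k$ directly, using the same orthogonality to show $g_k^TB_{k-1}^{-1}g_k=g_k^Tg_k$, but the $2\times2$ algebra is heavier.)
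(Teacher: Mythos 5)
Your proof is correct and follows essentially the same route as the paper's: both project the quasi-Newton equation onto $g_k$, kill the $V_{k-1}$ contribution via $\mathcal{R}(V_{k-1})\subseteq\mathcal{K}_k(p_0,H)\perp g_k$, and extract the $\phi_k$-dependence from the representation of $U_k$ in Lemma~\ref{U_g}. The only difference is bookkeeping: the paper evaluates $g_k^TU_kp_k^{CG}$ directly via $g_i^Tp_k^{CG}=-g_k^Tg_k$ for $i\le k$, whereas you expand $p_k^{CG}=-g_k+(g_k^Tg_k/\sigma)p_{k-1}$ and invoke the quasi-Newton condition and the steplength formula, producing the same cancellation of the $p^THp/\sigma^2$ terms.
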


\begin{proof}
Omitting all $'k-1'$-subscripts throughout the proof. Our assumption is that $p_k$ is given by
$$
p_k=-g_k-V_{k-1}p_k -U_kp_k.
$$
By Theorem \ref{thmbroyden} $p_k \myparallel
p_k^{CG}$ hence,
$$
\delta_k p_k^{CG}=-g_k-V_{k-1}\delta_k p_k^{CG} -U_k \delta_k p_k^{CG}.
$$
If the above expression is projected onto $g_k$ it holds that
$$
g_k^T(\delta_k p_k^{CG})=-g_k^Tg_k-\delta_k g_k^TV_{k-1} p_k^{CG} - \delta_k g_k^TU_k p_k^{CG}=
-g_k^Tg_k-0- \delta_k g_k^TU_k p_k^{CG},
$$
since by condition (i) of Lemma \ref{B_cg}, $\mathcal{R}(V_{k-1})
\subseteq \mathcal{K}_{k}(p_0,H)=span\{g_0, g_1, \dots, g_{k-1}\}$.
Using \eqref{broyden5} for $U_k$ it follows that
\begin{equation*}
\delta_k g_k^T p_k^{CG}=-g_k^Tg_k- \delta_k g_k^T g_k
\left(
\begin{array}{cc} 
-\frac{p^THp}{(p^TBp)^2} & \quad \frac{p^THp}{(p^TBp)^2}+\frac{\phi_k}{p^TBp}
\end{array} \right)
\left(
\begin{array}{c} 
g_{k-1}^T p_k^{CG}\\ 
g_k^T p_k^{CG} 
\end{array} \right).
\end{equation*}

From \eqref{cgdir2} it follows that $g_i^Tp_k^{CG}=-g_k^Tg_k$, for all $i \leq k$, which implies that
\begin{equation*}
-\delta_k g_k^Tg_k=-g_k^Tg_k- \delta_k g_k^T g_k
\left(
\begin{array}{cc} 
-\frac{p^THp}{(p^TBp)^2} & \quad \frac{p^THp}{(p^TBp)^2}+\frac{\phi_k}{p^TBp}
\end{array} \right)
\left(
\begin{array}{c} 
-g_k^Tg_k\\ 
-g_k^Tg_k 
\end{array} \right), 
\end{equation*}
dividing on both sides with $-g_k^Tg_k$ yields
\begin{equation*}
\delta_k =1- \delta_k g_k^T g_k
\big(
-\frac{p^THp}{(p^TBp)^2} + \frac{p^THp}{(p^TBp)^2}+\frac{\phi_k}{p^TBp}
\big)=1- \delta_k g_k^T g_k\frac{\phi_k}{p^TBp},
\end{equation*}
which implies \eqref{deltabroyden}.
 
\end{proof}
Note that this result implies that for $\phi_k=0$ we get $\delta_k=1$,
i.e. $p_k^{BFGS}=p_k^{CG}$ as long as $\phi_i$, for all $i \leq k-1$,
are well-defined\footnote{See the next paragraph and Section \ref{sec:broyden defined}.}. In \cite{nazareth}, Nazareth
derives the relation between $p_k^{BFGS}$ and $p_k^{CG}$ using
induction. 

Note that substituting the limit $\phi_k \to -(p_{k-1}^TB_{k-1}p_{k-1})/(g_k^Tg_k)$ into
\eqref{deltabroyden} for $g_k\ne \nolinebreak 0$ yields $\delta_k \to \infty$.
This limit of $\phi_k$ is called a \emph{degenerate value}
since it makes $B_k$
singular, which is a contradiction to our assumption that
$B_kp_k=-g_k$ is compatible. For all other values of $\phi_k$, we get a one-to-one
correspondence between $\phi_k$ and $\delta_k$ and this implies that our assumption that
$B_kp_k=-g_k$ is compatible actually implies that the solution $p_k$
is unique.

The assumption of Proposition \ref{phi delta}, that $B_{k-1}$ is chosen to satisfy condition (i) of Lemma \ref{B_cg},
will be satisfied if, in each iteration $k$, the update matrix $U_k$
belongs to the one-parameter Broyden family, \eqref{broyden2}, for
some $\phi_k$.

\subsection{Remarks on when the one-parameter
  Broyden family is well-defined}\label{sec:broyden defined}

There are several ways in which an update scheme using the
one-parameter Broyden family may become not well-defined. We have
already mentioned the degenerate value that makes $B_k$ singular in
the previous section. 

In addition to the above, the matrix $U_k$ may itself become not well-defined.
For all well-defined values of $\phi_k$ it holds $U_k$ given by \eqref{broyden2}
is not well-defined if and only if $p_{k-1}^TB_{k-1}p_{k-1}=0$ and $B_{k-1}p_{k-1} \neq 0$. 
It is clear that requiring $B_{k-1}$ to be definite (positive or negative) is sufficient
to avoid $U_k$ being not well-defined. However, from Proposition
\ref{Bdefined} it follows that, on (QP), $p_{k-1}^TB_{k-1}p_{k-1}=0$ and $B_{k-1}p_{k-1} \neq 0$
does not occur for any update scheme which generates search
directions parallel to those of CG.\footnote{For general functions we
  may have $p_{k-1}^TB_{k-1}p_{k-1}=0$ and $B_{k-1}p_{k-1} \neq 0$, and in \cite{fletchersinclair} the
  values of $\phi_k$ that give rise to this situation are characterized
and also termed degenerate values.}

Hence, on \eqref{qp}, in order for $U_k$, given by \eqref{broyden2}, to become not
well-defined, the undefinedness must enter in the Broyden parameter
$\phi_k$. It is well-known that the symmetric rank-one update, SR1, may
become not well-defined on (QP), see, e.g. \cite[Chapter
9]{luenberger}. SR1 is uniquely determined by
\eqref{conjqn}\footnote{An illustration that the conditions (i)-(iii)
  of Proposition \ref{U_cg_2} are indeed
  only sufficient.} and the Broyden parameter for SR1 is given by $\phi^{SR1}_k=(p_{k-1}^THp_{k-1})/(p_{k-1}^T(H-B_{k-1})p_{k-1})$,
an expression that becomes undefined for $p_{k-1}^T(H-B_{k-1})p_{k-1}=0$. Note that
this is equivalent to $\alpha_{k-1}=(p_{k-1}^TB_{k-1}p_{k-1})/(p_{k-1}^THp_{k-1})=1$. 

Hence, we may
summarize our remarks on when $U_k$ given by the one-parameter Broyden
family, \eqref{broyden2}, is well-defined in the following corollary.

\begin{corollary}
Unless $\phi_k=\phi^{SR1}_k$ in \eqref{broyden2} with the unit
steplength taken in the same iteration, then $U_k$ defined by \eqref{broyden2}, the one-parameter
Broyden family, is always well-defined on \eqref{qp}.
\end{corollary}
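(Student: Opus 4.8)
The plan is to collect the observations of the preceding discussion into a short exhaustive case analysis of where an update matrix written in the form \eqref{broyden}--\eqref{broyden2} can fail to be well-defined. Such a matrix is a finite sum of finite rank-one terms, so it is well-defined precisely when every scalar appearing in a denominator is nonzero and the Broyden parameter $\phi_k$ is itself a well-defined real number. Inspecting \eqref{broyden} together with the formula for $w$, and \eqref{broyden2} together with $\varphi=\phi_k\, p_{k-1}^T B_{k-1} p_{k-1}$, the only denominators that occur are $p_{k-1}^T H p_{k-1}$ and $p_{k-1}^T B_{k-1} p_{k-1}$; hence these two scalars and $\phi_k$ are the only candidates for undefinedness.

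First I would dispose of the two denominators. Since $H = H^T \succ 0$ and $p_{k-1} \neq 0$, we have $p_{k-1}^T H p_{k-1} > 0$. For the second denominator we are exactly in the setting of Proposition \ref{Bdefined}, because the update scheme in question generates search directions parallel to those of CG: writing $p_{k-1} = \delta_{k-1} p_{k-1}^{CG}$ with $p_{k-1}^{CG}$ a descent direction gives $p_{k-1}^T B_{k-1} p_{k-1} = -p_{k-1}^T g_{k-1} \neq 0$ unless $g_{k-1} = 0$, i.e.\ unless the optimum of \eqref{qp} has already been reached and no further iteration takes place. So on \eqref{qp} both denominators in \eqref{broyden2} are nonzero whenever we genuinely iterate, and $U_k$ can fail to be well-defined only through $\phi_k$. (The separate degenerate value $\phi_k = -(p_{k-1}^T B_{k-1} p_{k-1})/(g_k^T g_k)$, which would make $B_k$ singular, is already excluded by the standing assumption that \eqref{calp} is compatible, and concerns $B_k$ rather than $U_k$ in any case.)

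It then remains to identify when $\phi_k$ is undefined. For every update in the family whose parameter is assigned a prescribed value --- for instance BFGS, $\phi_k = 0$ --- there is nothing to check, and $U_k$ is well-defined on \eqref{qp} regardless of the steplength. The one member of the family whose Broyden parameter is not free but is instead forced --- by requiring $U_k$ to be symmetric rank one, equivalently by the quasi-Newton condition \eqref{conjqn} as in the preceding remark --- is SR1, for which the parameter is $\phi^{SR1}_k = (p_{k-1}^T H p_{k-1})/(p_{k-1}^T (H - B_{k-1}) p_{k-1})$; this expression is undefined precisely when $p_{k-1}^T (H - B_{k-1}) p_{k-1} = 0$, which by the exact-linesearch identity $\alpha_{k-1} = (p_{k-1}^T B_{k-1} p_{k-1})/(p_{k-1}^T H p_{k-1})$ is equivalent to $\alpha_{k-1} = 1$. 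Combining the two steps yields the corollary: on \eqref{qp}, $U_k$ given by \eqref{broyden2} is well-defined unless $\phi_k = \phi^{SR1}_k$ and the unit steplength is taken in that iteration. The only point requiring care --- rather than a genuine obstacle --- is checking that the case list is exhaustive, namely that \eqref{broyden}--\eqref{broyden2} contains no denominator beyond the two named ones and that, once those are nonzero and $\phi_k$ is finite, $U_k$ is a finite matrix; both facts are immediate from the explicit formulas.
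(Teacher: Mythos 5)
Your proposal is correct and follows essentially the same route as the paper's own (prose) argument in Section~\ref{sec:broyden defined}: the denominators $p_{k-1}^THp_{k-1}$ and $p_{k-1}^TB_{k-1}p_{k-1}$ are handled by positive definiteness of $H$ and by Proposition~\ref{Bdefined} respectively, so undefinedness can only enter through $\phi_k$, and the SR1 value $\phi^{SR1}_k$ blows up exactly when $\alpha_{k-1}=1$. Your explicit check that the list of denominators in \eqref{broyden}--\eqref{broyden2} is exhaustive is a small but welcome addition of rigor over the paper's presentation.
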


Hence, taking the unit steplength is an indication that one needs to choose a
different update scheme than SR1 when forming $B_{k}$. We therefore
stress that hereditary (positive or negative) definiteness is not a
necessary property for the update matrices $U_k$ to be well-defined
when solving \eqref{qp}. 

\section{Conclusions}\label{sec:conclusion}

The main result of this paper are the precise conditions on the update
matrix $U_k$ stated in Proposition \ref{U_cg_2}. In addition, from
Theorem \ref{thmbroyden}, we draw the conclusion that, in the
framework where we use the sufficient condition \eqref{conj} to
guarantee conjugacy of the search directions, the update schemes in QN
that give parallel search directions to those of CG are completely
described by the one-parameter Broyden family. Hence, we are able to
extend the well-known connection between CG and QN, e.g., given in
\cite{fletcherpractical}. We show that, under the sufficient condition
\eqref{conj}, there are no other update matrices, even of higher rank,
that make $p_k \myparallel p_k^{CG}$.

It seems as it may be the sufficient requirement to
get conjugate directions, \eqref{conj} that limits the freedom when choosing $U_k$ to the one-parameter Broyden
family of updates as shown by the above results. Since
this condition is only sufficient, one may still pose the
question if there are other update schemes for QN that yield the same sequence of
iterates as CG on \eqref{qp}. We believe that in order to understand
this possible limitation it will be necessary to obtain a deeper
understanding of CG.

We derive a one-to-one correspondence between the Broyden parameter
$\phi_k$ and the scaling $\delta_k$. This relation implies that the
assumption that \eqref{calp} is compatible implies uniqueness of
the solution $p_k$. If the degenerate value is used $B_k$ becomes singular.

We are also able to make the remark that the update matrices belonging
to the one-parameter Broyden
family is always well-defined on \eqref{qp}, unless the
steplength is of unit length and in the same iteration the rank-one
update is used. In this case it is the Broyden parameter $\phi_k$ that
becomes undefined.

In this paper we have focused on quadratic programming. Besides being
important in its own right, it is also a highly important as a subproblem when
solving unconstrained optimization problems. For a survey on methods
for unconstrained optimization see, e.g., \cite{recentadv}. Also, it
deserves mentioning that work has been done on QN
update schemes for general unconstrained optimization
considering \eqref{conj} for only $j=k-1$ and $j=k-2$, deriving an
update scheme that satisfies the quasi-Newton condition and has a
minimum violation of it for the previous step, see
\cite{mifflinnazareth, nazareth2}. 

A further motivation for this research in this paper is that the deeper understanding
of what is important in the choice of $U_k$ could be implemented in a
limited\nolinebreak-memory QN method. I.e., can one choose which columns to save
based on some other criteria than just picking the most recent ones? See, e.g., \cite[Chapter 9]{nocedalwright}, for an introduction to limited\nolinebreak-memory QN methods.

Finally, it should be pointed out that the discussion of this paper is
limited to exact arithmetic. Even in cases where CG and QN generate
identical iterates in exact arithmetic, the difference between
numerically computed iterates by the two methods may be quite large,
see, e.g.~\cite{hager}.

{\small
\subsubsection*{Acknowledgements:}
We thank the referees and the associate editor for their insightful
comments and suggestions which significantly improved the presentation
of this paper. 
}

\bibliography{references}{}
\bibliographystyle{acm}

\end{document}